\newtheorem{theorem}{Theorem}[section]
\newtheorem{lemma}[theorem]{Lemma}
\newtheorem{definition}[theorem]{Definition}
\newtheorem{corollary}[theorem]{Corollary}
\newtheorem{remark}[theorem]{Remark}
\newcommand\ZZ{\mathbb{ Z}}
\begin{document}
\title{A simple and consistent definition of homogeneous Besov spaces on stratified Lie groups}
\author{
Hartmut F\"uhr\\
\footnotesize\texttt{{fuehr@matha.rwth-aachen.de}}}

\maketitle

\begin{abstract}
We introduce a general definition of homogeneous Besov spaces on a stratified Lie group $G$, based on a Littlewood-Paley-type decomposition of Schwartz functions with all moments vanishing. We show that under mild and intuitive conditions the spaces thus defined are independent of the decomposition employed. A corollary of this is that previously constructed versions of homogeneous Besov spaces on $G$, relying on the spectral calculus of a sub-Laplacian of the group, are consistent, i.e., independent of the choice of sub-Laplacian.

 We further prove characterizations of homogeneous Besov spaces using continuous wavelet transforms, with a large variety of analysing wavelets to choose from. 
%


 \end{abstract}
%

\section{Introduction}\label{introduction}

In this note, we investigate Littlewood-Paley-decompositions and associated scales of Besov spaces on simply connected, connected stratified Lie groups. Previous definitions of homogeneous and non-homogeneous Besov and Sobolev spaces mostly relied on the spectral theory of a sub-Laplacian operator \cite{Folland75,Saka79,Furio-Melzi-Veneruso06,Ba,FuMa}. Briefly, a dyadic partition of unity of the positive half line is translated via the spectral calculus of the sub-Laplacian to yield a Littlewood-Paley-decomposition of functions and tempered distributions of the underlying group. Besov spaces are then defined in terms of ${\rm L}^p$-norms of the terms in the Littlewood-Paley decomposition. An alternative approach used the action of the heat semigroup associated to the sub-Laplacian, which may be interpreted as a continuous-scale Littlewood-Paley or continuous wavelet decomposition. While the connection between smoothness spaces and spectral theory is well established (see e.g. \cite{BuBe}), one definite drawback is that a priori, it is hard to tell to which extent these spaces depend on the choice of the sub-Laplacian. Since two such sub-Laplacians cannot be expected to commute, it seems very difficult to relate their spectral measures in any meaningful way. Accordingly, the existing proofs that Besov or Sobolev spaces associated to a sub-Laplacian are independent of the choice of sub-Laplacian (and thus, inherent to the group rather than to the operator), are quite subtle and technical, and they usually employ alternative characterizations of the spaces. See e.g. \cite{Folland75}. Also, many of the previous sources simply did not address the issue of independence of the choice of sub-Laplacian, at least not explicitly. For instance, in \cite{Saka79} independence can be concluded for certain inhomogeneous Besov spaces from \cite[Theorem 12]{Saka79} characterizing these spaces in terms of  moduli of continuity. 

In this paper, a more general approach is taken, which we now roughly sketch. We consider general Littlewood-Paley-decompositions of tempered distributions of the type
\[
  f = \lim_{M \to \infty} \sum_{|j| \le M} f \ast \psi_j^* \ast \eta_j~.
\] Here, $\psi_j,\eta_j$ are obtained from suitable $\psi,\eta \in \mathcal{S}(G)$ via dyadic dilations: $\psi_j = D_{2^j} \psi$, where $D_a$ denotes a dilation operator naturally associated to the stratified Lie group $G$. The convergence in the decomposition is required to hold in the Schwartz topology, for all Schwartz functions $f$ with all moments vanishing. Note that such decompositions are usually constructed in the above-mentioned way from a sub-Laplacian; in fact, it is the only general construction procedure for Littlewood-Paley-type decompositions that we are currently aware of. 

Then, for $1 \le p,q \le \infty$, the Besov semi-norms associated to $\psi$ are defined as 
\[
 \| u \|_{\dot{B}_{p,q}^{s,\psi}} = \left\| \left( 2^{js} \| u \ast \psi_j^* \|_p \right)_{j \in \mathbb{Z}} \right\|_{\ell^q(\mathbb{Z})}~.
\] We let $\dot{B}_{p,q}^{s,\psi}$ denote the space of all $u \in \mathcal{S}'(G)/\mathcal{P}$ (the quotient space modulo polynomials) for which this is finite. 
Our chief result, Theorem \ref{thm:main}, then states that, under very mild and intuitive conditions on the functions $\psi,\eta$,  the semi-norms and associated spaces do not depend on the Littlewood-Paley decomposition. As a corollary, these spaces coincide with the Besov spaces associated to sub-Laplacians, regardless of the precise choice of the sub-Laplacian. In particular, the latter are indeed  independent of the choice of sub-Laplacian. 

The final result, Theorem \ref{thm:main_2} considers continuous Littlewood-Paley or wavelet decompositions, showing e.g. for $q < \infty$, 
\[
\| u \|_{\dot{B}_{p,q}^s} \asymp \left( \int_{\mathbb{R}} a^{sq} \| f \ast D_a \psi \|_p^q \frac{da}{a} \right)^{1/q}~,
\]
Here, the restrictions on the analysing window $\psi$ can be further weakened. In particular, the admissible wavelets constructed in \cite{GeMa} are all available for the characterization of Besov spaces as well, provided they have sufficiently many vanishing moments. 

\section{Preliminaries and Notation}

$G$ always denotes a simply connected, connected stratified Lie group with Lie algebra $\mathfrak{g}$. Our source of reference for standard definitions and facts that are not explicitly mentioned in the following is \cite{FollandStein82}..
Since $G$ is exponential, we identify $G$ with $\mathfrak{g}$ as a set, and endow the latter with the Campbell-Hausdorff product; under this identification, the exponential map is just the identity map. Left and right Haar measure on $G$ is then given by Lebesgue measure. For a function $f$ on $G$ we define $\tilde{f}(x) = f(x^{-1})$ and $f^* = \overline{\tilde{f}}$. Since $G$ is unimodular, these operators preserve ${\rm L}^p$-norms. 
 
 The assumption that $G$ is stratified means that $\mathfrak{g}$ may be decomposed into a direct sum $\mathfrak{g} = \sum_{i=0}^m V_i$, with subspaces $V_i$ fulfilling $[V_i,V_j] \subset V_{i+j}$ and $[V_i,V_n] = \{ 0 \}$. The elements of the Lie algebra are canonically identified with left-invariant operators. We pick a basis of $\mathcal{g}$, obtained as a union of bases of the $V_i$. In particular, $Y_1,\ldots,Y_l$, for $l = \dim (V_1)$, is a basis of $V_1$. The {\em sub-Laplacian} associated to this basis is defined as 
\[
 \mathcal{L} = - \sum_{i=1}^l Y_i^2~,
\] first on $C_c^\infty(G)$, and then extended uniquely to an unbounded self-adjoint operator on ${\rm L}^2(G)$. 

Under our identification of $G$ with $\mathfrak{g}$, polynomials on $G$ are polynomials on $\mathfrak{g}$ (with respect to any linear coordinate system on the latter). Polynomials on $G$ are written as 
\begin{equation} \label{eqn:poly_gen_form} p\left(\sum_{i=1}^{\dim(G)}  x_i Y_i\right) =  \sum_I c_I x^I 
 \end{equation}
where $c_I \in \mathbb{C}$ are the coefficients, and $x^I=
x_1^{I_1}x_2^{I_2}\cdots x_n^{I_n}$ the monomials associated to the multi-indices $I \in \mathbb{N}^{\{ 1,\ldots,n\}}$. For a multi-index $I$, define
\[
 d(I) = \sum_{i=1}^n I_i n(i)~,n(i) = j \mbox{ for } Y_i \in V_j~.
\] A polynomial of the type (\ref{eqn:poly_gen_form}) is called {\em of homogeneous degree $k$} if $d(I) \le k$ holds, for all multiindices $I$ with $c_I \not= 0$. We write $\mathcal{P}_k$ for the space of polynomials of homogeneous degree $k$.

$\mathcal{S}(G)$ is the space of Schwartz functions on $G$; our identification entails 
$\mathcal{S}(G) = \mathcal{S}(\mathfrak{g})$.    $\mathcal{S}'(G)$ denotes the space of tempered distributions. We let 
\[
 \mathcal{S}(\mathbb{R}^+) = \{ f \in C^\infty(0,\infty) : \forall k , f^{(k)} \mbox{ decreases rapidly }, \lim_{\xi \to 0} f^{(k)} (\xi) \mbox{exists } \}  ~.\] We introduce the following convention for the spectral calculus of $\mathcal{L}$: Given any bounded Borel function $g$ on $\mathbb{R}^+$,  we write $g(\mathcal{L}) \delta$ for the convolution kernel associated to the operator $g(\mathcal{L})$. It is known that for $g \in \mathcal{S}(\mathbb{R}^+)$,  $g(\mathcal{L}) \delta$ is in $\mathcal{S}(G)$; see \cite{Hu} for details.

Dilations on $\mathfrak{g}$ are linear maps defined by 
\[
\delta_a \left( \sum_{i=1}^m X_i\right) = \sum_{i=1}^m a^i X_i~,~X_i \in V_i~.
\] They are Lie-algebra automorphisms of $\mathfrak{g}$, and consequently group automorphisms on $G$. 
The ${\rm L}^1$-normalized dilation operator on $G$, for any function $g$ on ${G}$ and $t>0$, is defined by
\begin{align}\notag
D_t g (x)= t^{Q}g(ax)~,
\end{align}
where $Q= \sum_1^m j(\dim V_j)$ is the {\em homogeneous dimension} of $G$.  
Observe that this action preserves the  $L^1$-norm, i.e., $\parallel
\phi_a\parallel_1= \parallel \phi\parallel$.  Furthermore, by straightforward calculation,
\begin{equation}
 D_s (D_t g) = D_{st} g~,~D_s(f \ast g) = (D_s f) \ast (D_s g)
\end{equation}

We fix a homogeneous
norm $|\cdot |$ on $G$ which is smooth away from $0$,  $|ax| =
a|x|$ for all $x \in G$, $a > 0$, $|x^{-1}| = |x|$ for all $x \in
G$, with $|x|
> 0$ if $x \neq 0$, and fulfilling a triangle inequality $|xy| \le C(|x|+|y|)$, with constant $C>0$. 

\section{Homogeneous Besov spaces defined via general Littlewood-Paley-decompositions}

Littlewood-Paley decompositions are based on building blocks with certain favourable properties. In this paper we focus on Schwartz functions, and the main additional property required are sufficiently many vanishing moments. 
\begin{definition}
 Let $N \in \mathbb{N}$. A function $f: G \to \mathbb{C}$ has decay order $N$ if  there exists a constant $C_f>0$ such that
\[
 |f(x)| \le C (1+|x|)^{-N}
\]
$f$ has vanishing moments of order $N$, if  one has 
\begin{equation}
 \label{eqn:defn_van_mom}\forall p \in \mathcal{P}_{N-1}~:~\int_G f(x) p(x) dx = 0~,
\end{equation} with absolute convergence of the integral. 
\end{definition}
Under our identification of $G$ with $\mathfrak{g}$, the inversion map $x \mapsto x^{-1}$ is identical to the additive inversion map. I.e., $x^{-1} = -x$, and it follows that $\tilde{p} \in \mathcal{P}_{N}$ for all $p \in \mathcal{P}_N$. Thus, if $f$ has vanishing moments of order $N$, then for all $p \in \mathcal{P}_{N-1}$ 
\[
  \int_G \tilde{f}(x) p(x) dx = \int_G f(x) \tilde{p}(x) dx  = 0,
\] i.e., $\tilde{f}$ has vanishing moments of order $N$ as well. 

We let $\mathcal{Z}(G)$ denote the space of Schwartz functions with all moments vanishing. The following properties of $\mathcal{Z}(G)$ will be used repeatedly (see \cite[Lemma 3.1]{FuMa}): $\mathcal{Z}(G)$ is a closed subspace of $\mathcal{S}(G)$, with $\mathcal{Z}(G) \ast \mathcal{S}(G) \subset \mathcal{Z}(G)$, and $f^* \in \mathcal{Z}(G)$ for all $f \in \mathcal{Z}(G)$. Furthermore, the dual of $\mathcal{Z}(G)$ is canonically identified with the quotient space $\mathcal{S}(G) / \mathcal{P}$. The topology of $\mathcal{S}'(G)/\mathcal{P}$ is just the topology of pointwise convergence on the elements of $\mathcal{Z}(G)$. Hence, for any net $(u_j + \mathcal{P} )_{j \in I}$, $u_j + \mathcal{P} \to u + \mathcal{P}$ holds if and only of $\langle u_j, \varphi \rangle = \langle u, \varphi \rangle$, for all $\mathcal{Z}(G)$. In the following, we will not explicitly distinguish between $u \in \mathcal{S}'(G)$ and $u+  \mathcal{P} \in \mathcal{S}'(G)/\mathcal{P}$, and write $u \in \mathcal{S}(G)/\mathcal{P}$ instead.

\begin{lemma} \label{lem:conv_SmP}
  For every $\psi \in \mathcal{S}(G)$, the map $u \mapsto u \ast \psi$ is well-defined and continuous on $\mathcal{S}(G)/\mathcal{P}$. If $\psi \in \mathcal{Z}(G)$, it is well-defined and continuous from $\mathcal{S}'(G)/\mathcal{P} \to \mathcal{S}'(G)$. 
\end{lemma}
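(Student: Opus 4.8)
The plan is to reduce the statement to the dual description of convolution with a Schwartz function, together with one elementary fact about convolving a polynomial with a Schwartz function. Recall that for $u \in \mathcal{S}'(G)$ and $\psi \in \mathcal{S}(G)$ the convolution $u \ast \psi \in \mathcal{S}'(G)$ is defined by $\langle u \ast \psi, \varphi\rangle = \langle u, \varphi \ast \psi^{*}\rangle$ for $\varphi \in \mathcal{S}(G)$, where $\varphi \mapsto \varphi \ast \psi^{*}$ is a continuous linear operator on $\mathcal{S}(G)$; thus $u \mapsto u \ast \psi$ is (weak-$*$) continuous on $\mathcal{S}'(G)$ as the transpose of that operator. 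Throughout I will use that $\mathcal{S}'(G)/\mathcal{P}$ carries the weak-$*$ topology of the dual of $\mathcal{Z}(G)$, that $\mathcal{Z}(G)$ is stable under $f \mapsto f^{*}$, and that $\mathcal{Z}(G) \ast \mathcal{S}(G) \subseteq \mathcal{Z}(G)$; combining the last two facts with the identity $(f \ast g)^{*} = g^{*} \ast f^{*}$ also gives $\mathcal{S}(G) \ast \mathcal{Z}(G) \subseteq \mathcal{Z}(G)$.

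The crux is the claim that for every $p \in \mathcal{P}$ and every $\psi \in \mathcal{S}(G)$ the function $p \ast \psi$ again lies in $\mathcal{P}$, and that $p \ast \psi = 0$ whenever $\psi \in \mathcal{Z}(G)$. I would prove this by writing $(p \ast \psi)(x) = \int_{G} p(x z^{-1})\, \psi(z)\, dz$ (an absolutely convergent integral, since $\psi$ is Schwartz and $z \mapsto p(xz^{-1})$ has polynomial growth) and using that, in the exponential coordinates identifying $G$ with $\mathfrak{g}$, the map $(x,z) \mapsto x z^{-1}$ is polynomial. Expanding $p(x z^{-1})$ as a finite sum $\sum_{\alpha} q_{\alpha}(x)\, r_{\alpha}(z)$ with $q_{\alpha}, r_{\alpha}$ polynomials yields $p \ast \psi = \sum_{\alpha} \big( \int_{G} r_{\alpha}(z)\, \psi(z)\, dz \big)\, q_{\alpha}$, which is a polynomial, all of whose coefficients vanish as soon as every moment of $\psi$ vanishes.

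Granting this claim, both assertions become bookkeeping. For $\psi \in \mathcal{S}(G)$: changing the representative of $u \in \mathcal{S}'(G)/\mathcal{P}$ alters $u \ast \psi$ only by an element of $\mathcal{P}$, so $u \mapsto u \ast \psi$ descends to a well-defined map $\mathcal{S}'(G)/\mathcal{P} \to \mathcal{S}'(G)/\mathcal{P}$; and since $\varphi \ast \psi^{*} \in \mathcal{Z}(G) \ast \mathcal{S}(G) \subseteq \mathcal{Z}(G)$ for $\varphi \in \mathcal{Z}(G)$, the identity $\langle u \ast \psi, \varphi\rangle = \langle u, \varphi \ast \psi^{*}\rangle$ makes sense on the quotient and exhibits the map as the transpose of the continuous operator $\varphi \mapsto \varphi \ast \psi^{*}$ on $\mathcal{Z}(G)$, hence weak-$*$ continuous. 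For $\psi \in \mathcal{Z}(G)$: the claim now gives $p \ast \psi = 0$ for all $p \in \mathcal{P}$, so for $u \in \mathcal{S}'(G)/\mathcal{P}$ the distribution $\tilde{u} \ast \psi$ is independent of the chosen representative $\tilde{u} \in \mathcal{S}'(G)$, defining a genuine map $\mathcal{S}'(G)/\mathcal{P} \to \mathcal{S}'(G)$; and since $\varphi \ast \psi^{*} \in \mathcal{S}(G) \ast \mathcal{Z}(G) \subseteq \mathcal{Z}(G)$ for $\varphi \in \mathcal{S}(G)$, the identity $\langle u \ast \psi, \varphi\rangle = \langle u, \varphi \ast \psi^{*}\rangle$ again depends only on the class of $u$ and presents $u \mapsto u \ast \psi$ as the transpose of the continuous map $\mathcal{S}(G) \to \mathcal{Z}(G)$, $\varphi \mapsto \varphi \ast \psi^{*}$, which is therefore weak-$*$ continuous from $\mathcal{S}'(G)/\mathcal{P}$ to $\mathcal{S}'(G)$.

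The only genuinely non-formal ingredient is the polynomial claim — in particular, in the second case, the point that $\varphi \ast \psi^{*}$ retains all vanishing moments even when $\varphi$ is only Schwartz. This rests on the polynomiality of the group multiplication in exponential coordinates and on the stability property $\mathcal{Z}(G) \ast \mathcal{S}(G) \subseteq \mathcal{Z}(G)$ recalled from \cite{FuMa}; everything else is routine manipulation of transposes and weak-$*$ topologies, so I expect no real obstacle once the polynomial bookkeeping is carried out correctly.
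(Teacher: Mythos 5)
Your proof is correct and follows essentially the same route as the paper: both rest on the duality identity $\langle u \ast \psi, \varphi\rangle = \langle u, \varphi \ast \psi^*\rangle$ together with the facts $\mathcal{P} \ast \mathcal{S}(G) \subseteq \mathcal{P}$, $\mathcal{P} \ast \mathcal{Z}(G) = \{0\}$, and $\mathcal{Z}(G) \ast \mathcal{S}(G) \subseteq \mathcal{Z}(G)$ (the paper phrases the transpose/weak-$*$ argument via convergent nets, which is the same thing). The only difference is that you supply the polynomial-expansion proof of $p \ast \psi \in \mathcal{P}$ and $p \ast \psi = 0$ for $\psi \in \mathcal{Z}(G)$, which the paper simply quotes from its preliminaries and \cite{FuMa}.
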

\begin{proof}
Note that $\mathcal{P} \ast \mathcal{S}(G) \subset \mathcal{P}$. Hence $u \mapsto u \ast \psi$ induces a well-defined canonical map $\mathcal{S}'(G)/\mathcal{P} \to \left( \mathcal{S}'(G)/\mathcal{P} \right)^{\mathbb{Z}}$. Furthermore, $u \mapsto u \ast \psi$ is continuous on $\mathcal{S}'(G)$, as a consequence of \cite[Proposition 1.47]{FollandStein82}. Therefore, for any net $u_j \to u$ and any $\varphi \in \mathcal{Z}(G)$, the fact that $\varphi \ast \psi^* \in \mathcal{Z}(G)$ allows to write 
\begin{equation} \label{eqn:conv_cont} \langle u_j \ast \psi,\varphi \rangle = \langle u_j ,\varphi \ast \psi^* \rangle \to \langle u, \varphi \ast \psi^* \rangle = 
 \langle u \ast \psi, \varphi \rangle~,
\end{equation} showing $u_j \ast \psi \to u \ast \psi$ in $\mathcal{S}'(G)/\mathcal{P}$. 

For $\psi \in \mathcal{Z}(G)$, the fact that $\mathcal{P} \ast \psi = \{ 0 \}$ makes the mapping $u \mapsto u \ast \psi$ well-defined modulo polynomials. The continuity statement is proved by (\ref{eqn:conv_cont}), with assumptions on $\psi$ and $\varphi$ switched.  
\end{proof}

The definition of homogeneous Besov spaces requires taking ${\rm L}^p$-norms of elements of $\mathcal{S}(G)/\mathcal{P}$. The following remark clarifies this. 

\begin{remark} \label{rem:LP_norm_dist}
Throughout this paper, we use the canonical embedding  $\mathcal{L}^p(G) \subset \mathcal{S}'(G)$ canonically embedded. For $p< \infty$ this gives rise to an embedding $\mathcal{L}^p(G) \subset \mathcal{S}'(G)/\mathcal{P}$, using that $\mathcal{P} \cap {\rm L}^p(G) = \{ 0 \}$. Consequently, given $u \in \mathcal{S}'(G)/\mathcal{P}$, we let 
\begin{equation} \label{eqn:defn_lp_P} \| u \|_p = \| u + q \|_p \mbox{ whenever }u+q \in {\rm L}^p(G)~, \mbox{ for suitable } q \in \mathcal{P}
\end{equation} assigning the value $\infty$ otherwise. 
Here the fact that $\mathcal{P} \cap {\rm L}^p(G) = \{ 0 \}$ guarantees that the decomposition is unique, and thus (\ref{eqn:defn_lp_P}) well-defined.

By contrast, $\| \cdot \|_\infty$ can only be defined on $\mathcal{S}'(G)$, if we assign the value $\infty$ to $u \in \mathcal{S}'(G) \setminus {\rm L}^\infty(G)$. 

Note that with these definitions,  the Hausdorff-Young inequality $\| u \ast f \|_p \le \|u \|_p \| f \|_1$ remains valid for all $f \in \mathcal{S}(G)$, and all  $u \in \mathcal{S}'(G)/\mathcal{P}$ (for $p<\infty$), resp. $u \in \mathcal{S}'(G)$ (for $p=\infty$).
For $p=\infty$, this is clear. For $p< \infty$, note that if $u + q \in {\rm L}^p(G)$, then $(u+q) \ast \psi = u \ast \psi + q \ast \psi \in {\rm L}^p(G)$ with $q \ast \psi \in \mathcal{P}$. 
\end{remark}


%
We now introduce a general Littlewood-Paley-type decomposition. 
For this purpose, we define, given $\psi \in \mathcal{S}(G)$, 
\[
 \psi_j = D_{2^j} \psi ~.
\] 
\begin{definition} \label{defn:LP_S(G)}
 A pair of functions $(\psi,\eta) \in \mathcal{S}(G)^2$ is called LP-admissible if for all $f \in \mathcal{Z}(G)$, 
\begin{equation} \label{eqn:LP_schwartz}
 f = \lim_{N \to \infty} \sum_{|j| \le N} f \ast \psi_j^* \ast \eta_j
\end{equation} holds, with convergence in the Schwartz space topology.
 This entails the convergence
\begin{equation} \label{eqn:LP_dist}
 u = \lim_{N \to \infty} \sum_{|j| \le N} u \ast \psi_j^* \ast \eta_j~.
\end{equation} for all $u \in \mathcal{S}'(G)/\mathcal{P}$. 

A single function $\psi \in \mathcal{S}(G)$ is called LP-admissible, if there exists a suitable $\eta$ such that $(\psi,\eta)$ is $\mathcal{S}'(G)$-admissible. 
\end{definition}

Obviously, ``LP-admissible'' is shorthand for  ``Littlewood-Paley-admissible''. 

In an LP-admissible pair $(\psi,\eta)$, $\psi$ is called {\em analysis wavelet} and $\eta$ is called {\em synthesis wavelet}.

The existence of LP-admissible pairs is nontrivial. However, large classes can be constructed via the spectral calculus of sub-Laplacians:
\begin{lemma} \label{lem:construct_admissible_pair}
 There exist LP-admissible pairs $(\psi,\psi) \in \mathcal{Z}(G)^2$.
\end{lemma}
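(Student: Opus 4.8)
The plan is to construct the pair by translating a dyadic partition of unity on $\mathbb{R}^+$ through the spectral calculus of the sub-Laplacian $\mathcal{L}$. Concretely, I would pick a function $g \in \mathcal{S}(\mathbb{R}^+)$ supported in, say, $[1/4, 4]$, with $g(\xi) \ge 0$, $g$ not identically zero, and arrange that $\sum_{j \in \mathbb{Z}} g(2^{2j}\xi)^2 = 1$ for all $\xi > 0$; this is the standard Littlewood-Paley construction on the half-line (start from any bump $h$ supported away from $0$ and $\infty$ with $\sum_j h(2^{2j}\xi) > 0$ everywhere, then set $g = (h / \sum_j h(2^{2j}\cdot))^{1/2}$, which lies in $\mathcal{S}(\mathbb{R}^+)$ since the sum is locally finite and bounded below on the support). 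Then I would define $\psi = g(\mathcal{L})\delta$, which is a Schwartz function by the result of Hulanicki cited in the preliminaries. The scaling behaviour of $\mathcal{L}$ under dilations — namely $D_{2^j}(g(\mathcal{L})\delta) = (g(2^{2j}\cdot))(\mathcal{L})\delta$, since $\mathcal{L}$ is homogeneous of degree $2$ — gives $\psi_j = g(2^{2j}\cdot)(\mathcal{L})\delta$.

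The key computation is then that, because convolution of kernels corresponds to composition of spectral multipliers (they all commute, being functions of the single operator $\mathcal{L}$) and $\psi = \psi^*$ automatically since $g$ is real-valued and $\mathcal{L}$ self-adjoint, one has
\[
 \sum_{|j| \le N} \psi_j^* \ast \psi_j \ast f = \left( \sum_{|j|\le N} g(2^{2j}\cdot)^2 \right)(\mathcal{L}) f \longrightarrow f
\]
for $f \in \mathcal{Z}(G)$, using $\sum_j g(2^{2j}\xi)^2 \equiv 1$. Establishing that the convergence holds in the Schwartz topology (not merely in ${\rm L}^2$) is the real content. The natural route is to show that the partial-sum multipliers $m_N(\xi) = \sum_{|j|\le N} g(2^{2j}\xi)^2$ converge to the constant $1$ in a sense strong enough that $(1 - m_N)(\mathcal{L})f \to 0$ in $\mathcal{S}(G)$: for $f \in \mathcal{Z}(G)$ one exploits that $f = \mathcal{L}^k h$ for suitable $h \in \mathcal{S}(G)$ (all moments vanishing lets one invert $\mathcal{L}$), so $(1-m_N)(\mathcal{L})f = \big(\xi^k(1 - m_N(\xi))\big)(\mathcal{L})h$, and near $\xi = 0$ the factor $1 - m_N$ vanishes to arbitrarily high order in $\xi$ as $N \to \infty$ (only finitely many terms $g(2^{2j}\xi)$ are nonzero, and the small-$j$ tail is supported in $[\,c\,2^{-2N}, \infty)$), while near $\xi = \infty$ the large-$j$ tail is controlled similarly; combined with Schwartz-norm estimates on spectral multipliers applied to $\mathcal{L}$ (again via Hulanicki-type transference), this forces the Schwartz norms of $(1-m_N)(\mathcal{L})f$ to zero. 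Finally, the passage from (\ref{eqn:LP_schwartz}) to (\ref{eqn:LP_dist}) is exactly the duality/continuity statement already packaged in Lemma \ref{lem:conv_SmP} together with the fact that $\mathcal{Z}(G)$ is dense-in-duality with $\mathcal{S}'(G)/\mathcal{P}$, so nothing new is needed there.

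The main obstacle I anticipate is the quantitative control of $(1 - m_N)(\mathcal{L})$ in Schwartz norm: one needs a clean statement of the form ``if $\phi_N \to 0$ in $\mathcal{S}(\mathbb{R}^+)$ then $\phi_N(\mathcal{L})\delta \to 0$ in $\mathcal{S}(G)$'', and while this is essentially in the literature (\cite{Hu}, and used in \cite{FuMa}), verifying that the specific sequence $\xi^k(1 - m_N(\xi))$ does tend to $0$ in $\mathcal{S}(\mathbb{R}^+)$ — uniformly in all derivatives, with the rapid-decay and limit-at-$0$ conditions — requires a little care with the two tails of the sum. Everything else (the construction of $g$, the dilation identity, the algebra of spectral multipliers, and the factorization $f = \mathcal{L}^k h$ on $\mathcal{Z}(G)$) is routine.
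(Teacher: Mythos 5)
Your construction is exactly the paper's: a dyadic square partition of unity $\sum_j |\widehat{\psi}(2^{2j}\xi)|^2=1$ pushed through the spectral calculus of $\mathcal{L}$, with the paper likewise delegating the hard part (convergence of the partial sums in the Schwartz topology) to the cited \cite[Lemma 3.3]{FuMa} rather than proving it. One caveat on your sketch of that delegated step: writing $f=\mathcal{L}^k h$ turns the multiplier into $\xi^k(1-m_N(\xi))$, which controls the low-frequency tail but is \emph{unbounded} at $\xi\to\infty$ (the high-frequency tail of $1-m_N$ lives on $\xi\gtrsim 4^{N}$ where $\xi^k$ blows up), so the two tails require opposite tricks --- multiply by $\xi^{k}$ near $0$ but by $\xi^{-k}$ (i.e.\ use smoothness, $\mathcal{L}^k f\in\mathcal{S}(G)$) near $\infty$ --- a point you flag as ``requiring care'' but whose resolution is not ``similar'' to the first tail.
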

\begin{proof}
 Here we only sketch the construction. Pick $\widehat{\psi} \in C_c^\infty(\mathbb{R}^+)$ vanishing identically in a neighborhood of zero, and satisfying
\[
 1= \sum_{j\in \ZZ} |\hat\psi(2^{2j}\xi)|^2\quad a.e ~~ 
\]
Then the associated convolution kernel $\psi = \widehat{\psi}(\mathcal{L})\delta$ is in $\mathcal{Z}(G)$, and such that $(\psi,\psi)$ is LP-admissible \cite[Lemma 3.3]{FuMa}. 
\end{proof}

We note that the construction can be viewed as a straightforward generalization of the Littlewood-Paley-decompositions used e.g. in \cite{FrazierJawerth85}.

The following remark illustrates the additional freedom bought by the use of possibly distinct analysis and synthesis wavelets. It also shows that LP-admissible pairs need not be in $\mathcal{Z}(G)$. 
\begin{remark}
 Consider $G = \mathbb{R}$. Pick $\psi \in \mathcal{S}(\mathbb{R})$ such that $\widehat{\psi}$ is compactly supported, vanishes in a neighborhood of the identity, and fulfills 
\[
 \forall \psi \in \mathbb{R}\setminus{0}~:~1= \sum_{j\in \ZZ} |\hat\psi(2^{j}\xi)|^2 ~~ 
\] Note that throughout this remark, $\widehat{\cdot}$ denotes the usual Fourier transform on $\mathbb{R}$. Then it is not hard to see that $(\psi,\psi)$ is LP-admissible (this is essentially a special case of the above-cited \cite[Lemma 3.3]{FuMa}), and that $\psi \in \mathcal{Z}(G)$. Now pick $\varphi \in \mathcal{S}(\mathbb{R})$  with 
$\widehat{\varphi}$ supported in a neighborhood of $0$ where $\widehat{\psi}$ vanishes. Assume in addition that 
\[
 \widehat{\varphi} (0) \not=0~.
\] Then the convolution theorem yields that letting $\eta = \psi + \varphi$ implies $\eta^* \ast \psi = \psi^* \ast \psi$, and thus
$(\eta,\psi)$ is also LP-admissible, even though the zeroth moment of $\eta$ is nonzero. 
\end{remark}

An LP-admissible function $\psi$ gives rise to a Littlewood-Paley type decomposition of an element of $\mathcal{S}'(G)/\mathcal{P}$, which is the mapping $u \mapsto (u \ast \psi_j^*)_{j \in \mathbb{Z}}$. We now associate a scale of Besov-type spaces to the function $\psi$. 
\begin{definition} \label{defn:B_psi} 
 Let $(\psi,\eta) \in \mathcal{S}(G)^2$ be LP-admissible, let $1 \le p < \infty$, $1\le q \le  \infty$, and $s \in \mathbb{R}$. The {\bf homogeneous Besov space associated to $\psi$} is defined as 
\begin{equation}
 \label{eqn:def_B_psi}
\dot{B}_{p,q}^{s,\psi} = \left\{ u \in \mathcal{S}'(G)/\mathcal{P} : \left( 2^{js} \| u \ast \psi_j^* \|_p \right)_{j \in \mathbb{Z}} \in \ell^q(\mathbb{Z}) \right\}~,
\end{equation}
with associated norm 
\[
 \| u \|_{\dot{B}_{p,q}^{s,\psi}} = \left\|  \left( 2^{js} \| u \ast \psi_j^* \|_p \right)_{j \in \mathbb{Z}}  \right\|_{\ell^q(\mathbb{Z})}~.
\]  
For $p = \infty$, we define $\| \cdot \|_{\dot{B}_{p,q}^{s,\psi}}$ in an analogous way, but only for $(\psi,\eta) \in \mathcal{Z}(G)^2$.
\end{definition}

\begin{remark} The definition relies on the conventions regarding ${\rm L}^p$-norms of distributions (modulo polynomials), as outlined in Remark \ref{rem:LP_norm_dist}. For well-definedness in the case $p=\infty$, recall the observations in Lemma \ref{lem:conv_SmP}.
The Besov norm is a norm because of (\ref{eqn:LP_dist}). 
\end{remark}

\begin{remark}
This definition generalizes  homogeneous Besov spaces on $\mathbb{R}^n$, but also the constructions in \cite{Ba,FuMa}. 
\end{remark}

For all norm equivalences proved in this paper, the following lemma contains the crucial estimate. It formulates a central idea in wavelet analysis: In a convolution product of the type $g \ast D_t f$, vanishing moments of one factor together with smoothness of the other result in decay. Later on, we will apply the lemma to Schwartz functions $f,g$, where only the vanishing moment assumptions are nontrivial. The more general version given here is included for reference. 
\begin{lemma} \label{lem:decay_conv} Let $N,k \in \mathbb{N}$ be arbitrary. 
 \begin{enumerate}
\item[(a)]  Let $f \in C^{k}$, such that $Y^I(f)$ is of decay order $N$, for all $I$ with $d(I) \le k$. Let $g$ have vanishing moments of order $k$ and decay order $N+k +Q+1$. Then there exists a constant, depending only on $f$ and $g$, such that 
\begin{equation} \label{eqn:decay_conv1}
 \forall x \in G~ \forall~ 0 < t < 1 ~:~|g \ast (D_t f)(x)| \le C t^{k+Q} (1+|tx|)^{-N}~.
\end{equation}
 In particular, if $p>Q/N$,
 \begin{equation} \label{eqn:decay_conv1_norm}
 \forall x \in G~ \forall~ 0 < t < 1 ~:~\| g \ast (D_t f) \|_p \le C' t^{k+Q(1-1/p)} ~.
\end{equation}
 \item[(b)] Now suppose that $g \in C^k$, with $Y^I(\tilde{g})$ of  decay order $N$, and $f$ has vanishing moments of order $k$ and decay order $N+k+Q+1$. Then there exists a constant, depending only on $f$ and $g$, such that 
\begin{equation} \label{eqn:decay_conv2}
  \forall x \in G~ \forall~ 1 < t < \infty ~:~|g \ast (D_t f)(x)| \le C t^{-k}(1+|x|)^{-N} ~.
\end{equation}
 In particular, if $p>Q/N$, 
 \begin{equation} \label{eqn:decay_conv2_norm}
 \forall x \in G~ \forall~ 1 < t < \infty ~:~\|g \ast (D_t f)\|_p \le C' t^{-k} ~.
\end{equation}
\end{enumerate}
\end{lemma}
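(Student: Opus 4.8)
The plan is to prove (a) by writing out the convolution integral and using the vanishing moments of $g$ to subtract a Taylor polynomial of $D_t f$. Explicitly, for fixed $x$ we write
\[
 g \ast (D_t f)(x) = \int_G g(y)\, (D_t f)(y^{-1}x)\, dy = t^Q \int_G g(y)\, f(\delta_t(y^{-1}x))\, dy~,
\]
and since $g$ has vanishing moments of order $k$, we may subtract from $f(\delta_t(y^{-1}x))$ its left Taylor polynomial (in the sense of \cite{FollandStein82}) of homogeneous degree $k-1$ around the point $\delta_t(x)$, without changing the value of the integral — here one uses that $y \mapsto$ (that Taylor polynomial, as a function of $y$ through $\delta_t(y^{-1}x)$) is a polynomial of homogeneous degree $\le k-1$ in $y$, so it is annihilated against $g$. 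The Taylor inequality on stratified groups then bounds the remainder by a sum of terms $|\delta_t(y)|^{d(I)} \cdot \sup\{ |Y^I f(z)| : |z^{-1}\delta_t(x)| \le C^{k}|\delta_t(y)|\}$ over $d(I) = k$; the dilation scales this as $t^{d(I)}|y|^{d(I)}$, which together with the prefactor $t^Q$ gives the $t^{k+Q}$ factor.

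The second key step is to control the supremum of $|Y^I f|$ on the relevant ball by the decay hypothesis, and to carry out the $y$-integration. One splits $G$ into the region $|y| \le |x|/(2C)$ (say), where $|z| \gtrsim |x|$ for all $z$ in the ball, so the decay order $N$ of $Y^I f$ yields a factor $(1+|tx|)^{-N}$ that can be pulled out, leaving $\int |y|^{k}|g(y)|\,dy < \infty$ by the decay order $N+k+Q+1 > k+Q$ of $g$; and the complementary region $|y| > |x|/(2C)$, where one instead uses the decay of $g$ to absorb the polynomial growth in $|y|$ coming from the Taylor remainder and from $|z|^{-N}$ at worst bounded by a constant. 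Combining gives (\ref{eqn:decay_conv1}). The norm estimate (\ref{eqn:decay_conv1_norm}) then follows by computing $\| (1+|t\cdot|)^{-N}\|_p = t^{-Q/p}\|(1+|\cdot|)^{-N}\|_p$, finite precisely when $Np > Q$, and multiplying by $t^{k+Q}$.

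For part (b) the plan is to reduce to part (a) by a duality/adjoint manipulation rather than repeating the argument. Writing $g \ast (D_t f) = \big( (D_t f)^* \ast g^* \big)^*$ and using $(D_t f)^* = D_t(f^*)$ together with $t = 1/s$ with $0<s<1$, one has $D_t f = D_{1/s} f = $ a dilation by a factor $>1$; rescaling via $D_t h = D_{1/s}h$ and the identity $h \ast D_{1/s}\varphi = D_{1/s}\big( (D_s h)\ast \varphi\big)$ lets one trade the large dilation on $f$ for a small dilation $D_s$ on $g$. After this bookkeeping, the roles of the two factors are as in (a): $f$ plays the role of the function with vanishing moments and decay $N+k+Q+1$, and $\tilde g$ (equivalently $g^*$ up to conjugation, which is irrelevant for absolute values) plays the role of the smooth slowly-decaying factor, with $Y^I \tilde g$ of decay order $N$. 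One extracts the factor $t^{-k} = s^{k}$ from (\ref{eqn:decay_conv1}) and checks that the dilation acting on the $(1+|sx|)^{-N}$ term reproduces $(1+|x|)^{-N}$. The norm bound (\ref{eqn:decay_conv2_norm}) follows as before since the dilation is now $L^p$-neutral up to the already-tracked constant.

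The main obstacle I expect is the careful Taylor-expansion step on the stratified group: one must choose the right base point for the expansion so that the subtracted polynomial is genuinely of homogeneous degree $\le k-1$ \emph{as a function of the integration variable $y$} (this is what makes the vanishing moments of $g$ applicable), and one must invoke the correct form of the stratified Taylor inequality with its homogeneous-norm remainder estimate and the geometric constant $C$ from the triangle inequality for $|\cdot|$. Getting the interplay between the dilation $\delta_t$, the homogeneous degrees $d(I)$, and the decay exponents bookkept correctly — so that exactly $t^{k+Q}$ (resp.\ $t^{-k}$) comes out and the integral over $y$ converges — is the delicate part; the two-region splitting and the final $L^p$ computation are routine once the pointwise bound is in hand.
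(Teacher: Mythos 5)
Your proposal follows essentially the same route as the paper: part (a) by subtracting the degree-$(k-1)$ left Taylor polynomial of $\widetilde{D_t f}$ at the appropriate base point (so that the subtracted polynomial lies in $\mathcal{P}_{k-1}$ as a function of the integration variable $y$ and is annihilated by the vanishing moments of $g$), with the homogeneity bookkeeping producing exactly $t^{k+Q}$, and part (b) via the identity $g \ast D_t f(x) = t^Q\bigl(\tilde f \ast D_{1/t}\tilde g\bigr)(\delta_t(x^{-1}))$, which reduces it to (a) with the roles of the two factors swapped. The only cosmetic difference is that the paper extracts the factor $(1+|tx|)^{-N}$ using the submultiplicative estimate $(1+|t(xz)|)^{-N} \le (1+|tx|)^{-N}(1+|tz|)^{N}$ instead of your two-region splitting (both work), and the delicate point you flag --- choosing the base point so the Taylor polynomial is genuinely polynomial in $y$, i.e.\ expanding $\tilde f$ rather than $f$ --- is resolved in the paper exactly as you anticipate.
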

\begin{proof} 
First, let us prove $(a)$: 
For $x \in G$, let $P_{\tilde{f},x}^k$ denote the left Taylor polynomial of $\tilde{f}$ with homogeneous degree $k-1$, see \cite[1.44]{FollandStein82}. By that result, 
\[
 \left|f(y^{-1}x) - P^k_{x,f}(y)\right| \le C_k |y|^{k} \sup_{|z| \le b^{k} |y|,d(I) = k} |Y^I (D_t \tilde{f})(xz)|~,
\] with suitable positive constants $C_k$ and $b$. We next use the homogoneity properties of the partial derivatives\cite[p.21]{FollandStein82}, together with the decay condition on $Y^I f$ to estimate
\begin{eqnarray*}
\sup_{|z| \le b^{k} |y|}  |Y^I (D_t f)(xz)| & = & t^{k} \sup_{|z| \le b^k |y|} |D_t (Y^I f)(xz)| \\
 & = & t^{k+Q} \sup_{|z| \le b^k y} |(Y^I f)(t(x\cdot z))| \\
 & \le &  t^{k+Q} \sup_{z \le b^{k}|y|} C_f ( 1+|t(x \cdot z)|)^{-N} \\
 & \le &  t^{k+Q} \sup_{z \le b^{k}|y|} C_f (1+|tx|)^{-N} (1+|tz|)^N \\
 & \le &   t^{k+Q} (1+b)^{kN} C_f (1+|tx|)^{-N} (1+ |y|)^N~,
\end{eqnarray*} where the penultimate inequality used \cite[1.10]{FollandStein82}, and the final estimate used $|ty| = t |y| \le |y|$.  Thus
\[
  \left|f(xy) - P^k_{x,f}(y)\right| \le \tilde{C}_k  t^{k+Q} (1+|y|)^{N+k} (1+|tx|)^{-N}~.
\] Next, using vanishing moments of $g$, 
\begin{eqnarray*}
\lefteqn{\left| (g \ast D_t f) (x) \right|} \\ 
 & \le & \int_G |g(y)| \left| D_t f(y^{-1}x) -P_{x,D_t \tilde{f}}^k(y)\right| dy \\
 & \le & C (1+|tx|)^{-N}  t^{k+Q} \int_G |g(y)| ~ (1+|y|)^{N+k} dy \\
& \le & C (1+|tx|)^{-N}   t^{k+Q}  \int_G C_g (1+|y|)^{-Q-1} dy~,
\end{eqnarray*} and the integral is finite by \cite[1.15]{FollandStein82}. This proves (\ref{eqn:decay_conv1}), and 
(\ref{eqn:decay_conv1_norm}) follows by
\[
 \| g \ast D_t f\|_p \le C' t^{k+Q} \left( \int_G (1+|tx|)^{-Np} dx \right)^{1/p} \le C'' t^{k+Q-Q/p}  ~,
\] using $Np > Q$. 

For part (b), we first observe that 
\[
  (g \ast D_t f)(x) = t^Q \left(\tilde{f} \ast D_{t^{-1}} \tilde{g} \right) (t.x^{-1})~.
\] Our assumptions on $f,g$ allow to invoke part (a) with $\tilde{g},\tilde{f}$ replacing $f,g$, and (\ref{eqn:decay_conv2}) follows immediately. (\ref{eqn:decay_conv2_norm}) is obtained from this by straightforward integration.
\end{proof}

\begin{theorem}
 \label{thm:main}
 Let $(\psi^1,\eta^1),(\psi^2,\eta^2) \in \mathcal{S}(G)^2$ be LP-admissible pairs. Let $s \in \mathbb{R}$ and $1 \le q \le \infty$.
\begin{enumerate} 
 \item[(a)] Assume that $p< \infty$, and that all $\psi^i,\eta^i$ have vanishing moments of order $k>|s|$. Then  $\dot{B}_{p,q}^{s,\psi^1} = \dot{B}_{p,q}^{s,\psi^2}$, with equivalent norms. 
 \item[(b)] Assume that $\psi^i,\eta^i \in \mathcal{Z}(G)$. Then  $\dot{B}_{\infty,q}^{s,\psi^1} = \dot{B}_{\infty,q}^{s,\psi^2}$, with equivalent norms. 
\end{enumerate}
\end{theorem}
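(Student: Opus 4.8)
The plan is to establish the norm equivalence $\|u\|_{\dot B_{p,q}^{s,\psi^1}} \asymp \|u\|_{\dot B_{p,q}^{s,\psi^2}}$ by exploiting the reproducing identity \eqref{eqn:LP_dist} for the pair $(\psi^2,\eta^2)$ together with the almost-orthogonality estimates from Lemma \ref{lem:decay_conv}. Concretely, for $u \in \dot B_{p,q}^{s,\psi^2}$ I would insert the decomposition $u = \sum_{k \in \mathbb{Z}} u \ast (\psi^2_k)^* \ast \eta^2_k$ into the expression $u \ast (\psi^1_j)^*$, obtaining
\[
 u \ast (\psi^1_j)^* = \sum_{k \in \mathbb{Z}} u \ast (\psi^2_k)^* \ast \eta^2_k \ast (\psi^1_j)^*~,
\]
with convergence in $\mathcal{S}'(G)$; the point is to control the ``cross terms'' $\eta^2_k \ast (\psi^1_j)^*$. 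Writing $\eta^2_k \ast (\psi^1_j)^* = D_{2^k}\!\left( \eta^2 \ast D_{2^{j-k}} (\psi^1)^* \right)$ and applying Lemma \ref{lem:decay_conv} (part (a) when $j \ge k$, part (b) when $j < k$, after normalizing by the dilation), the vanishing-moment order $k > |s|$ of all four functions yields a geometric decay estimate of the shape $\|\eta^2_k \ast (\psi^1_j)^*\|_1 \le C\, 2^{-|j-k|\,k'}$ for some $k' > |s|$ — here one uses that the $\mathrm{L}^1$-normalized dilation preserves the $\mathrm{L}^1$ norm so the constant is uniform in $k$. Then the Hausdorff–Young inequality from Remark \ref{rem:LP_norm_dist} gives
\[
 \| u \ast (\psi^1_j)^* \|_p \le \sum_{k \in \mathbb{Z}} \| u \ast (\psi^2_k)^* \|_p \, \| \eta^2_k \ast (\psi^1_j)^* \|_1 \le C \sum_{k \in \mathbb{Z}} 2^{-|j-k|k'} \| u \ast (\psi^2_k)^* \|_p~.
\]

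\textbf{Finishing the $\ell^q$ estimate.} Multiplying by $2^{js}$ and setting $a_j = 2^{js}\|u\ast(\psi^1_j)^*\|_p$, $b_k = 2^{ks}\|u\ast(\psi^2_k)^*\|_p$, the previous display becomes $a_j \le C\sum_k 2^{-|j-k|k'} 2^{(j-k)s} b_k = C\sum_k c_{j-k} b_k$ with $c_m = 2^{-|m|k'+ms}$. Since $k' > |s|$, the sequence $(c_m)_{m\in\mathbb{Z}}$ lies in $\ell^1(\mathbb{Z})$, so Young's inequality for convolution on $\mathbb{Z}$ gives $\|(a_j)\|_{\ell^q} \le C\|(c_m)\|_{\ell^1}\|(b_k)\|_{\ell^q}$, i.e. $\|u\|_{\dot B_{p,q}^{s,\psi^1}} \le C' \|u\|_{\dot B_{p,q}^{s,\psi^2}}$. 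Exchanging the roles of the two pairs gives the reverse inequality, and part (a) follows. For part (b), the only change is that $\|\cdot\|_\infty$ is only defined on $\mathcal{S}'(G)$ rather than on the quotient, but since all four functions now lie in $\mathcal{Z}(G)$, convolution with them is well-defined $\mathcal{S}'(G)/\mathcal{P} \to \mathcal{S}'(G)$ by Lemma \ref{lem:conv_SmP}, the Hausdorff–Young inequality still applies by Remark \ref{rem:LP_norm_dist}, and the identical argument goes through.

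\textbf{Main obstacle.} I expect the principal technical point to be the rigorous justification of interchanging the infinite sum $\sum_k$ with the convolution by $(\psi^1_j)^*$ and with the $\mathrm{L}^p$-norm — that is, showing that the partial sums of \eqref{eqn:LP_dist} converge in a strong enough sense (not merely in $\mathcal{S}'(G)/\mathcal{P}$) that the termwise estimates can be summed. One clean way is to first prove the decay estimate $\|u\ast(\psi^1_j)^*\|_p \le C\sum_k 2^{-|j-k|k'}\|u\ast(\psi^2_k)^*\|_p$ for $u \in \mathcal{Z}(G)$ (where all convergence is in the Schwartz topology and everything is legitimate), observe it shows the Besov norms are equivalent on $\mathcal{Z}(G)$, and then extend to all of $\dot B_{p,q}^{s,\psi^2}$ by a density/limiting argument — or, alternatively, note that the right-hand side of the target inequality being finite forces the needed absolute convergence a posteriori, so one can run a truncation argument with the tail controlled uniformly. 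A secondary nuisance is bookkeeping the two regimes $j \ge k$ versus $j < k$ in applying Lemma \ref{lem:decay_conv}, and checking that the hypotheses on decay order (automatic for Schwartz functions) and on vanishing moments (where $k > |s|$ enters) are met in each case after peeling off the dilation $D_{2^{\min(j,k)}}$.
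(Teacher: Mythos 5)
Your proposal is correct and follows essentially the same route as the paper: insert the reproducing formula for $(\psi^2,\eta^2)$ into $u\ast(\psi^1_j)^*$, bound the cross terms $\|\eta^2_k\ast(\psi^1_j)^*\|_1\le C2^{-|j-k|k}$ via Lemma \ref{lem:decay_conv} (splitting the cases $j\ge k$ and $j<k$ after peeling off the dilation), and conclude with Young's inequality for convolution on $\mathbb{Z}$ using $k>|s|$. The convergence issue you flag as the main obstacle is resolved in the paper exactly by your second suggested route (the boundedness of $(2^{js}\|u\ast(\psi^2_j)^*\|_p)_j$ forces absolute $\mathrm{L}^p$-convergence of the series, whose limit must agree with the $\mathcal{S}'(G)/\mathcal{P}$-limit); note only that in case (a) that limit is a priori only in $\mathcal{S}'(G)/\mathcal{P}$, not $\mathcal{S}'(G)$.
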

\begin{proof} The following proof works for $(a)$ and $(b)$ simultaneously. 
 It is sufficient to prove the norm equivalence, and here symmetry with respect to $\psi^1$ and $\psi^2$ immediately reduces the proof to showing, for a suitable constant $C>0$, 
\begin{equation} \label{eqn:half_norm_equiv}
  \forall u \in \mathcal{S}'(G)/\mathcal{P} : \| u \|_{\dot{B}_{p,q}^{s,\psi^1}} \le C \| u \|_{\dot{B}_{p,q}^{s,\psi^2}} ~,
\end{equation} in the extended sense that the left-hand side is finite whenever the right-hand side is. Hence assume that $u \in \dot{B}_{p,q}^{s,\psi^2}$; otherwise, there is nothing to show.  In the following, let $\psi_{i,j} = D_{2^j} \psi^i$, and $\eta_{i,j} = D_{2^j} \eta^i$, where $\eta^i$ is the reconstruction wavelet  associated to $\psi^i$ (i=1,2).

By LP-admissibility of $\psi^2$, 
\[
 u = \lim_{N \to \infty} \sum_{|j| \le N} u \ast \psi_{2,j}^* \ast \eta_{2,j}~, 
\] with convergence in $\mathcal{S}'(G)/\mathcal{P}$. 
Accordingly,
\begin{equation} \label{eqn:u_ast_eta}
 u \ast \psi_{1,\ell}^*= \lim_{N \to \infty} \sum_{|j| \le N} u \ast  \psi_{2,j}^* \ast  \eta_{2,j} \ast \psi_{1,\ell}^* ~,
\end{equation} where the convergence on the right-hand side holds in $\mathcal{S}'(G)/\mathcal{P}$, by \ref{lem:conv_SmP} (and even in $\mathcal{S}'(G)$, if $\psi_{1,\ell} \in \mathcal{Z}(G)$). We next show that the right-hand side also converges in ${\rm L}^p$. For this purpose, we observe that
\[
 \|  \eta_{2,j} \ast \psi_{1,\ell}^* \|_1 = \| D_{2^j} (\eta^2 \ast D_{2^{\ell-j}} \psi_1^{1*}) \|_1 = \| \eta^2 \ast D_{2^{\ell-j}} \psi^{1*} \|_1 
 \le C 2^{-|\ell- j|k}~.
\] For $\ell-j \ge 0$, this follows directly from (\ref{eqn:decay_conv2_norm}),  using $\psi^1,\eta^2 \in \mathcal{S}(G)$,  and vanishing moments of $\psi^1$, whereas for $\ell-j <0$, the vanishing moments of $\eta^2$ allow to apply (\ref{eqn:decay_conv1_norm}). 

Using Young's theorem, we estimate
\begin{eqnarray}
\nonumber \sum_{j \in \mathbb{Z}}  \| u \ast  \psi_{2,j}^* \ast \eta_{2,j} \ast \psi_\ell^* \|_p & \le &  \sum_{j \in \mathbb{Z}} \| u \ast \psi_{2,j}^*\|_p \| \psi_j \ast \eta_\ell^* \|_1 \\ & \le &   \| u \ast \psi_{2,j}^*\|_p  C_2 2^{-|j-\ell|k} \\
 & \le &  \label{eqn:est_sumj} C_2 \sum_{j  \in \mathbb{Z}} 2^{js} \| u \ast \psi_{2,j}^*\|_p   2^{-|j-\ell|k-js}~.
\end{eqnarray}
Next observe that 
\begin{equation} \label{eqn:matrix_decay} 2^{-|j-\ell|k - js}  = 2^{-\ell s} \cdot \left\{ \begin{array}{cc} 2^{-|j-\ell|(k+s) } & j \ge \ell \\ 2^{-|j-\ell| (k-s)} & j < \ell \end{array} \right. \le 2^{-\ell s} 2^{-|j-\ell|(k-|s|)}~.
\end{equation}
By assumption, the sequence $(2^{js} \| u \ast \psi_{j,2}^* \|_p)_{j \in \mathbb{Z}}$ is in $\ell^q$, in particular bounded. Therefore, $k-|s|>0$ yields that (\ref{eqn:est_sumj}) converges. But then the right-hand side of (\ref{eqn:u_ast_eta}) converges unconditionally with respect to $\| \cdot \|_p$. This limit coincides with the $\mathcal{S}'(G)/\mathcal{P}$-limit $u \ast \psi_{1,\ell}^*$ (which in the case of $\psi_{1,\ell}^* \in \mathcal{Z}(G)$ is even a $\mathcal{S}'(G)$-limit), yielding $u \ast \psi_{1,\ell}^* \in {\rm L}^p(G)$, with
\begin{eqnarray*}
 2^{\ell s} \| u \ast \psi_{1,\ell}^* \|_p & \le &  2^{\ell s} \sum_{j \in \mathbb{Z}}  \| u \ast  \psi_{2,j}^* \ast \eta_{2,j} \ast \psi_\ell^* \|_p \\
 & \le & C_3 2^{\ell s} \sum_{j \in \mathbb{Z}} 2^{js} \| u \ast \psi_{2,j}^* \|_p 2^{-|j-\ell| (k-|s|)}~. 
\end{eqnarray*}
Now an application of Young's inequality for convolution over $\mathbb{Z}$, again using $k - |s|>0$, provides (\ref{eqn:half_norm_equiv}). 
\end{proof}

As a consequence, we write $\dot{B}_{p,q}^s = \dot{B}_{p,q}^{s,\psi}$, for any LP-admissible $\psi \in \mathcal{Z}(G)$. These spaces coincide with the homogeneous Besov spaces of \cite{Ba,FuMa}, and with the usual definitions in the case $G=\mathbb{R}^n$. 
We now turn to decomposition with continuous scales. The following definition can be viewed as a continuous-scale analog of LP-admissibility.

\begin{definition}
 $\psi \in \mathcal{S}(G)$ is called {\bf admissible}, if for all $f \in \mathcal{Z}(G)$,
\[
  f = \lim_{\epsilon \to 0, A \to \infty} \int_\epsilon^A f \ast D_a(\psi^* \ast \psi) \frac{da}{a}~
\] holds with convergence in the Schwartz topology. 
\end{definition}

The next theorem reveals a large class of admissible wavelets. In fact, all the wavelets studied in \cite{GeMa} are also admissible in the sense considered here. Its proof is an adaptation of the argument showing \cite[Theorem 1]{GeMa}. 
\begin{theorem}
 Let $\widehat{h} \in \mathcal{S}(\mathbb{R}^+)$, and let $\psi = \mathcal{L} \widehat{h}(\mathcal{L}) \delta$.  Then $\psi$ is admissible up to normalization. 
\end{theorem}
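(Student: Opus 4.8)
The plan is to reduce the claimed continuous-scale reconstruction identity to a statement about the scalar multipliers appearing in the spectral calculus of $\mathcal{L}$, exactly as in the discrete case treated in Lemma \ref{lem:construct_admissible_pair}. Write $m(\xi) = \xi \widehat{h}(\xi)$, so that $\psi = m(\mathcal{L})\delta$ and $\psi^* \ast \psi = |m|^2(\mathcal{L})\delta$ (using that $\mathcal{L}$ is self-adjoint and positive, so $m(\mathcal{L})^* = \overline{m}(\mathcal{L})$, and that convolution kernels of functions of $\mathcal{L}$ multiply). Because dilations interact with $\mathcal{L}$ by $D_a(g(\mathcal{L})\delta) = g(a^{-2}\mathcal{L})\delta$ (the sub-Laplacian is homogeneous of degree $2$), we get $D_a(\psi^* \ast \psi) = |m|^2(a^{-2}\mathcal{L})\delta$. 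Hence, for $f \in \mathcal{Z}(G)$,
\[
\int_\epsilon^A f \ast D_a(\psi^* \ast \psi)\,\frac{da}{a} = f \ast \left( \int_\epsilon^A |m|^2(a^{-2}\mathcal{L})\,\frac{da}{a}\,\delta \right) = \Phi_{\epsilon,A}(\mathcal{L}) f,
\]
where $\Phi_{\epsilon,A}(\xi) = \int_\epsilon^A |m(a^{-2}\xi)|^2\,\frac{da}{a}$. Substituting $t = a^{-2}\xi$ turns this into $\tfrac12 \int_{A^{-2}\xi}^{\epsilon^{-2}\xi} |m(t)|^2\,\frac{dt}{t}$, which, as $\epsilon \to 0$ and $A \to \infty$, converges for every $\xi > 0$ to the constant
\[
c = \tfrac12 \int_0^\infty |m(t)|^2\,\frac{dt}{t} = \tfrac12 \int_0^\infty t\,|\widehat{h}(t)|^2\,dt,
\]
which is finite and strictly positive since $\widehat{h} \in \mathcal{S}(\mathbb{R}^+)$ is not identically zero (and is the normalization constant in the phrase "admissible up to normalization"; replacing $\psi$ by $c^{-1/2}\psi$ we may assume $c = 1$).

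The remaining work is to upgrade this pointwise-in-$\xi$ convergence of multipliers to convergence of $\Phi_{\epsilon,A}(\mathcal{L})f \to f$ in the Schwartz topology, for each fixed $f \in \mathcal{Z}(G)$. First I would note that the multipliers $\Phi_{\epsilon,A}$ are uniformly bounded in $\xi$ by $c$, and more importantly that they lie in a bounded subset of a suitable symbol class: since $m \in \mathcal{S}(\mathbb{R}^+)$ vanishes appropriately at $0$ and decays rapidly, each derivative $\xi^k \Phi_{\epsilon,A}^{(k)}(\xi)$ is controlled uniformly in $\epsilon, A$ (differentiate under the integral sign and use the substitution again). Then I would invoke the standard fact — available through the spectral-multiplier theory for sub-Laplacians, e.g. the Hulanicki-type results cited via \cite{Hu}, or more directly \cite[Lemma 3.3]{FuMa} and its continuous analogue — that a bounded family of such multipliers, converging pointwise to the multiplier $1$ (on $(0,\infty)$), induces a family of convolution operators converging to the identity in the operator topology of continuous operators on $\mathcal{Z}(G)$ with its Schwartz topology. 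Concretely: $f = \mathbf{1}(\mathcal{L})f$ makes sense for $f \in \mathcal{Z}(G)$ because one can write $f = g(\mathcal{L})f$ for some $g \in \mathcal{S}(\mathbb{R}^+)$ (using that $f$ itself arises, up to the decomposition, from the functional calculus — or simply interpolate a $g$ that equals $1$ on a neighborhood of the $\mathcal{L}$-spectral support relevant to $f$), and then $\Phi_{\epsilon,A}(\mathcal{L})f - f = (\Phi_{\epsilon,A} - 1)g(\mathcal{L})f$, where $(\Phi_{\epsilon,A}-1)g$ is a bounded family in $\mathcal{S}(\mathbb{R}^+)$ tending to $0$ in that space; by continuity of $h \mapsto h(\mathcal{L})\delta$ from $\mathcal{S}(\mathbb{R}^+)$ to $\mathcal{S}(G)$ and the continuity of convolution, the difference tends to $0$ in $\mathcal{S}(G)$.

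I expect the main obstacle to be the passage from pointwise convergence of $\Phi_{\epsilon,A}$ to $1$ to convergence in the symbol topology on $\mathcal{S}(\mathbb{R}^+)$ after multiplying by a cutoff $g$ — and in particular justifying the dominated-convergence and differentiation-under-the-integral steps with bounds uniform in $\epsilon$ and $A$, near $\xi = 0$ where the truncated integral $\int_{A^{-2}\xi}^{\epsilon^{-2}\xi}$ has a shrinking domain, and near $\xi = \infty$ where one must exploit the rapid decay of $m$. These are exactly the estimates underlying \cite[Theorem 1]{GeMa}, so the cleanest route is to mirror that argument: reduce to the scalar integral, establish the uniform symbol bounds by the substitution $t = a^{-2}\xi$, and then quote the Schwartz-continuity of the functional calculus. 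Once the scalar convergence is in hand, the convolution identity $\int_\epsilon^A f \ast D_a(\psi^*\ast\psi)\,\tfrac{da}{a} = \Phi_{\epsilon,A}(\mathcal{L})f$ does all the structural work, and Schwartz convergence for every $f \in \mathcal{Z}(G)$ — which is the definition of admissibility — follows.
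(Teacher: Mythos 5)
Your opening reduction --- computing the spectral multiplier of $\int_\epsilon^A D_a(\psi^*\ast\psi)\,\frac{da}{a}$ and identifying the limiting constant $c=\tfrac12\int_0^\infty t\,|\widehat h(t)|^2\,dt$ --- coincides with the first step of the paper's proof (up to an immaterial sign convention in how $D_a$ interacts with $\mathcal{L}$). The gap is in the second step: passing from the pointwise convergence $\Phi_{\epsilon,A}(\xi)\to c$ on $(0,\infty)$ to convergence $\Phi_{\epsilon,A}(\mathcal{L})f\to cf$ in the Schwartz topology. The device you propose, writing $f=g(\mathcal{L})f$ for some $g\in\mathcal{S}(\mathbb{R}^+)$ equal to $1$ on the $\mathcal{L}$-spectral support of $f$, is not available for a general $f\in\mathcal{Z}(G)$: that spectral support is typically all of $[0,\infty)$ (it is neither bounded away from $0$ nor compact), while every $g\in\mathcal{S}(\mathbb{R}^+)$ decays at infinity, so no such $g$ exists. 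Moreover, the convergence $\Phi_{\epsilon,A}\to c$ is genuinely non-uniform near $\xi=0$: for every fixed $\epsilon,A$ one has $\Phi_{\epsilon,A}(0^+)=0$, so $\Phi_{\epsilon,A}-c$ does not tend to $0$ in any reasonable symbol class on $[0,\infty)$, and the ``standard fact'' you invoke --- that a bounded family of multipliers converging pointwise to $1$ yields operators converging to the identity on $\mathcal{Z}(G)$ --- is exactly the statement that needs proof. It is false on $\mathcal{S}(G)$ and true on $\mathcal{Z}(G)$ only because of the vanishing moments of $f$, which your argument never actually uses.

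The paper closes precisely this gap by a telescoping trick: the truncated multiplier equals $\widehat g(A^2\xi)-\widehat g(\epsilon^2\xi)$ for a single $\widehat g\in\mathcal{S}(\mathbb{R}^+)$ obtained by integrating $|m|^2(t)/t$, so the truncated kernel is $D_Ag-D_\epsilon g$ with $g=\widehat g(\mathcal{L})\delta\in\mathcal{S}(G)$. Then $f\ast D_Ag\to c_g f$ in $\mathcal{S}(G)$ is the standard approximate-identity result \cite[Proposition (1.49)]{FollandStein82}, while the term $f\ast D_\epsilon g$ is where $f\in\mathcal{Z}(G)$ enters: since the multiplier $\widehat g(\epsilon^2\xi)$ tends to the nonzero constant $\widehat g(0)$, the operators $\cdot\ast D_\epsilon g$ do \emph{not} tend to $0$ on $\mathcal{S}(G)$, and one must use the vanishing moments of $f$ together with the decay estimate (\ref{eqn:decay_conv1}) of Lemma \ref{lem:decay_conv}(a) to show that every Schwartz seminorm of $f\ast D_\epsilon g$ vanishes as $\epsilon\to0$. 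To complete your proof you would need to supply this argument (or an equivalent uniform treatment of the region near $\xi=0$); as written, that step does not go through.
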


\begin{proof}
The main idea of the proof is to write, for $f \in \mathcal{Z}(G)$, 
\begin{eqnarray*} \int_\epsilon^A f \ast D_a(\psi^* \ast \psi) \frac{da}{a} & = & f \ast \int_{\epsilon}^A D_a(\psi^* \ast \psi) \frac{da}{a} \\ & = & f \ast D_A g - f \ast D_\epsilon g~,\end{eqnarray*}
with $g \in \mathcal{S}(G)$. Once this is established, $f \ast D_A g \to c_g f$ for $A \to \infty$ follows by \cite[Proposition (1.49)]{FollandStein82}, with convergence in the Schwartz topology. Moreover, $f \in \mathcal{Z}(G)$ entails that $f \ast D_\epsilon g \to 0$: Given any $N>0$ and $I \in \mathbb{N}^n$ with associated left-invariant differential operator $Y^I$, we can employ (\ref{eqn:decay_conv1})  to estimate
\begin{eqnarray*}
 \sup_{x \in G} (1+|x|)^N  \left| (Y^I f \ast D_{\epsilon} g)(x) \right| & = & \sup_{x \in G} (1+|x|)^N \epsilon{Q+d(I)}
\left| f \ast D_{\epsilon} (Y^I g)(x) \right| \\
 & \le & C \sup_{x \in G} (1+|x|)^N \epsilon^{Q+d(I)+k}  (1+|\epsilon x|)^{-M} \\
 & \le & C \sup_{x \in G} (1+|x|)^{N-M} \epsilon^{Q+d(I)+k+M} ~,
\end{eqnarray*} which converges to zero for $\epsilon \to 0$, as soon as $M\ge N$ and $k > M-Q-d(I)$. But this implies $f \ast D_\epsilon g \to 0$ in $\mathcal{S}(G)$, by \cite{FollandStein82}. 

Thus it remains to construct $g$. To this end, define 
\begin{equation}
 \widehat{g}(\xi) = -\frac{1}{2} \int_{\xi}^\infty  a|\widehat{h}(a^2)|^2 da~,
\end{equation} which is clearly in $\mathcal{S}(\mathbb{R}^+)$, and let $g = \widehat{g}(\mathcal{L})\delta \in \mathcal{S}(G)$ denote the associated convolution kernel. 
Let $\varphi_1,\varphi_2$ be in $\mathcal{S}(G)$, and let $d\lambda_{\varphi_1,\varphi_2}$ denote the scalar-valued Borel measure associated to $\varphi_1,\varphi_2$ by the spectral measure. Then, by spectral calculus and the invariance properties of $da/a$, 
\begin{eqnarray*}
 \langle \int_{\epsilon}^A \varphi_1 \ast D_a (\psi^* \ast \psi) f \frac{da}{a}, \varphi_2 \rangle & = &  \int_{0}^\infty \int_\epsilon^A (a^2 \xi)^2 |h(a^2 \xi)|^2 \frac{da}{a} d\lambda_{\varphi_1,\varphi_2}(\xi)  \\
& = & \frac{1}{2}\int_{0}^\infty \int_{\epsilon^2 \xi}^{A^2 \xi} a |h(a^2 \xi)|^2 da d\lambda_{\varphi_1,\varphi_2}(\xi) \\
& = & \int_{0}^\infty \widehat{g}(A^2 \xi) - \widehat{g}(\epsilon^2 \xi)  d\lambda_{\varphi_1,\varphi_2}(\xi) \\
& = & \langle \varphi_1 \ast (D_A g - D_\epsilon g), \varphi_2 \rangle ~,
\end{eqnarray*} as desired. 
\end{proof}

Hence, by \cite[Corollary 1]{GeMa}:
\begin{corollary}
 \begin{enumerate}
  \item [(a)] There exist admissible $\psi \in \mathcal{Z}(G)$.
  \item[(b)] There exist admissible $\psi \in C_c^\infty(G)$ with vanishing moments of arbitrary order. 
 \end{enumerate}
\end{corollary}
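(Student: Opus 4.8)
The corollary is an immediate application of the previous theorem together with results from \cite{GeMa}, so my plan would be quite short.

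\medskip

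\textbf{Proof plan for the Corollary.} The strategy is to invoke the preceding theorem, which produces admissible wavelets (up to normalization) of the form $\psi = \mathcal{L}\widehat{h}(\mathcal{L})\delta$ for any $\widehat{h} \in \mathcal{S}(\mathbb{R}^+)$, and then feed it the two specific families of functions $h$ supplied by \cite[Corollary 1]{GeMa}. First I would recall that normalization is harmless: if $\psi$ is admissible up to a nonzero constant $c_g$, then $c_g^{-1/2}\psi$ (or the appropriately scaled function) is admissible in the strict sense of the definition, so it suffices to exhibit, for each part, some $\widehat{h} \in \mathcal{S}(\mathbb{R}^+)$ whose associated kernel $\psi = \mathcal{L}\widehat{h}(\mathcal{L})\delta$ lies in the desired class and has $c_g \neq 0$. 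The condition $c_g \neq 0$ amounts to $\widehat{h}$ not being identically zero, since $c_g$ is a positive multiple of $\int_0^\infty a|\widehat{h}(a^2)|^2\,da = \widehat{g}(0^+)$ up to sign, which is strictly positive as soon as $\widehat h \not\equiv 0$.

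\medskip

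For part (a), I would choose $\widehat{h} \in \mathcal{S}(\mathbb{R}^+)$ arbitrary but nonzero; then $\psi = \mathcal{L}\widehat{h}(\mathcal{L})\delta = (\xi \mapsto \xi\widehat{h}(\xi))(\mathcal{L})\delta$, and since the spectral multiplier $\xi \mapsto \xi\widehat{h}(\xi)$ again lies in $\mathcal{S}(\mathbb{R}^+)$ and vanishes at $0$, the results cited in the Preliminaries (namely that $g(\mathcal{L})\delta \in \mathcal{S}(G)$ for $g \in \mathcal{S}(\mathbb{R}^+)$, see \cite{Hu}) together with the moment characterization of $\mathcal{Z}(G)$ via vanishing of the spectral multiplier at the origin (as used in Lemma \ref{lem:construct_admissible_pair}) give $\psi \in \mathcal{Z}(G)$. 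For part (b), I would instead invoke \cite[Corollary 1]{GeMa}, which provides functions $h$ such that the associated kernels $\mathcal{L}\widehat h(\mathcal{L})\delta$ are compactly supported and smooth with a prescribed number of vanishing moments; applying the preceding theorem to such an $h$ yields an admissible $\psi \in C_c^\infty(G)$ with vanishing moments of the desired order, and letting the order tend to infinity along this family gives the stated conclusion.

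\medskip

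The only real content beyond bookkeeping is checking the two membership claims ($\psi \in \mathcal{Z}(G)$ in (a), and $\psi \in C_c^\infty(G)$ with many vanishing moments in (b)), and both are already packaged in the cited sources: part (a) follows from the standard fact that multiplying the spectral parameter by $\mathcal{L}$ kills all moments of the kernel, and part (b) is literally the content of \cite[Corollary 1]{GeMa}. So I do not expect any genuine obstacle; the main thing to be careful about is the normalization constant being nonzero, which I would dispatch at the outset as above.
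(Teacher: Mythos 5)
Your overall strategy---apply the preceding theorem to specific choices of $\widehat{h}$ supplied by \cite[Corollary 1]{GeMa}, after disposing of the normalization constant---is exactly what the paper intends (it gives no proof beyond the phrase ``Hence, by \cite[Corollary 1]{GeMa}''), and your treatment of the normalization is fine: the relevant constant is $\tfrac12\int_0^\infty a|\widehat{h}(a^2)|^2\,da>0$, so rescaling $\psi$ by its inverse square root works. Part (b) is also fine as far as it goes, since you (like the paper) defer the existence of $\widehat h$ with $\mathcal{L}\widehat h(\mathcal{L})\delta\in C_c^\infty(G)$ and prescribed vanishing moments entirely to \cite[Corollary 1]{GeMa}.

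There is, however, a genuine gap in your part (a). You claim that an \emph{arbitrary} nonzero $\widehat h\in\mathcal{S}(\mathbb{R}^+)$ yields $\psi=\mathcal{L}\widehat h(\mathcal{L})\delta\in\mathcal{Z}(G)$ because the multiplier $\xi\mapsto\xi\widehat h(\xi)$ vanishes at the origin, and you invoke ``the standard fact that multiplying by $\mathcal{L}$ kills all moments of the kernel.'' That fact is false: since $\mathcal{L}$ is formally self-adjoint, $\int_G(\mathcal{L}h)(x)p(x)\,dx=\int_G h(x)(\mathcal{L}p)(x)\,dx$, and $\mathcal{L}$ lowers the homogeneous degree of a polynomial by $2$, so a single application of $\mathcal{L}$ only produces vanishing moments of order $2$. (Concretely, on $G=\mathbb{R}$ with $h$ a Gaussian, $\int(-h'')(x)\,x^2\,dx=-2\int h\neq0$.) Membership in $\mathcal{Z}(G)$ requires the multiplier to vanish to \emph{infinite} order at $0$, not merely to first order---this is the situation in Lemma~\ref{lem:construct_admissible_pair}, where $\widehat\psi$ vanishes identically near $0$. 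The fix is easy: choose $\widehat h\in C_c^\infty(\mathbb{R}^+)$ supported away from the origin (or any $\widehat h$ all of whose derivatives vanish at $0$); then $\xi\widehat h(\xi)$ vanishes identically near $0$, the associated kernel lies in $\mathcal{Z}(G)$ by the same argument as in Lemma~\ref{lem:construct_admissible_pair}, and the theorem gives admissibility. You should make this correction explicit rather than relying on the stated ``standard fact.''
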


Another popular candidate for defining scales of Besov spaces is the heat kernel and its derivatives. In our setting, the heat kernel of a sub-Laplacian corresponds to $\widehat{h}(\mathcal{L})$, with $\widehat{h}(\xi) = e^{-\xi}$, and the associated wavelets $\mathcal{L}^k h$ are generalizations of the well-known {\bf Mexican Hat} wavelet. Thus the following theorem also yields that the heat kernel definition of Besov spaces is equivalent to the initial one, thereby extending \cite[Theorem 3.10]{FuMa}. 

\begin{theorem} \label{thm:main_2}
Let $\psi \in \mathcal{S}(G)$ be admissible, with vanishing moments of order $k$. Then, for all $s \in \mathbb{R}$ with $|s|< k$, and all $1 \le p < \infty$, $1 \le q \le \infty$, the following norm equivalence holds: 
\begin{equation} \label{eqn:equiv_CWT_Besov}
\forall u \in \mathcal{S}'(G)/\mathcal{P}~:~ \| u \|_{\dot{B}_{p,q}^s} \asymp \left\| a \mapsto  a^{s} \| u \ast D_a \psi \|_p \right\|_{{\rm L}^p(\mathbb{R}^+; \frac{da}{a})}  \end{equation}
 Here the norm equivalence is understood in the extended sense that one side is finite iff the other side is. If $\psi \in \mathcal{Z}(G)$, it is also valid for the case $p=\infty$. 
\end{theorem}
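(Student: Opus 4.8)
The plan is to reduce the assertion to the discrete characterization already in hand. By the definition preceding this theorem, $\dot{B}_{p,q}^{s}=\dot{B}_{p,q}^{s,\phi}$ for every LP-admissible $\phi\in\mathcal{Z}(G)$; fix one such $\phi$, with reconstruction wavelet $\tilde\phi$, via Lemma \ref{lem:construct_admissible_pair} (for $p=\infty$ this is legitimate since $\phi,\tilde\phi\in\mathcal{Z}(G)$, and for every $p$ the independence of the choice is Theorem \ref{thm:main}). It then suffices to compare $\bigl\|(2^{\ell s}\|u\ast\phi_\ell^*\|_p)_{\ell\in\mathbb{Z}}\bigr\|_{\ell^q}$ with the right-hand side of (\ref{eqn:equiv_CWT_Besov}), and I would do this, as in the proof of Theorem \ref{thm:main}, by two separate one-sided estimates, each understood in the extended sense that the dominated quantity is finite whenever the dominating one is; combining them yields the equivalence. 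That the right-hand side is even meaningful follows from the continuity of $a\mapsto D_a\psi$ into $\mathcal{S}(G)$ together with Lemma \ref{lem:conv_SmP}, which make $a\mapsto\|u\ast D_a\psi\|_p$ measurable; for $p=\infty$ the hypothesis $\psi\in\mathcal{Z}(G)$ guarantees that $u\ast D_a\psi$ and $u\ast\phi_\ell^*$ are genuine tempered distributions throughout.

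The analytic engine is Lemma \ref{lem:decay_conv}. Using $D_a(f\ast g)=D_af\ast D_ag$, $D_a(f^*)=(D_af)^*$ and the $L^1$-invariance of $D_a$, one has $\|D_a\psi^*\ast\phi_\ell^*\|_1=\|\psi^*\ast D_{2^\ell/a}\phi^*\|_1$ and $\|\tilde\phi_j\ast D_a\psi\|_1=\|\tilde\phi\ast D_{a/2^j}\psi\|_1$. Applying part (a) of Lemma \ref{lem:decay_conv} when the dilation parameter is $<1$ and part (b) when it is $>1$ — the only nontrivial hypotheses concern vanishing moments, supplied by the order-$k$ vanishing moments of $\psi$, hence of $\psi^*$, and by the vanishing moments of every order of $\phi$ and $\tilde\phi$, the remaining smoothness and decay requirements being automatic for Schwartz functions (take the decay order in Lemma \ref{lem:decay_conv} larger than $Q$) — one obtains, for every $M\in\mathbb{N}$, a constant $C_M$ with $\|D_a\psi^*\ast\phi_\ell^*\|_1\le C_M\min\bigl((2^\ell/a)^k,(a/2^\ell)^M\bigr)$ and $\|\tilde\phi_j\ast D_a\psi\|_1\le C_M\min\bigl((a/2^j)^M,(2^j/a)^k\bigr)$.

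For the estimate $\|u\|_{\dot{B}_{p,q}^{s,\phi}}\le C\,\bigl\|a\mapsto a^s\|u\ast D_a\psi\|_p\bigr\|_{{\rm L}^q(\mathbb{R}^+;\,da/a)}$, assume the right-hand side finite. Admissibility of $\psi$ gives, in $\mathcal{S}'(G)/\mathcal{P}$, the reproducing identity $u=\int_0^\infty u\ast D_a(\psi^*\ast\psi)\,\frac{da}{a}$; convolving with $\phi_\ell^*\in\mathcal{Z}(G)$, using Lemma \ref{lem:conv_SmP}, and writing $D_a(\psi^*\ast\psi)=D_a\psi^*\ast D_a\psi$, one gets $u\ast\phi_\ell^*=\int_0^\infty(u\ast D_a\psi^*)\ast(D_a\psi\ast\phi_\ell^*)\,\frac{da}{a}$ in $\mathcal{S}'(G)$. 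The first kernel bound, with the assumed finiteness, shows $\int_0^\infty\|u\ast D_a\psi^*\|_p\,\|D_a\psi\ast\phi_\ell^*\|_1\,\frac{da}{a}<\infty$, so the integral converges in ${\rm L}^p$; its ${\rm L}^p$-limit must coincide with the $\mathcal{S}'$-limit $u\ast\phi_\ell^*$, whence $u\ast\phi_\ell^*\in{\rm L}^p$ and $2^{\ell s}\|u\ast\phi_\ell^*\|_p\le C_M\int_0^\infty 2^{\ell s}\|u\ast D_a\psi^*\|_p\min\bigl((2^\ell/a)^k,(a/2^\ell)^M\bigr)\frac{da}{a}$. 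Substituting $a=2^t$ turns this into $G(\ell)\le C\int_{\mathbb{R}}\kappa(\ell-t)F(t)\,dt$ with $G(\ell)=2^{\ell s}\|u\ast\phi_\ell^*\|_p$, $F(t)=2^{ts}\|u\ast D_{2^t}\psi^*\|_p$ and $\kappa(r)=2^{rs}\min(2^{rk},2^{-rM})$, which for $k>|s|$ and $M>|s|$ decays exponentially in $|r|$, hence lies in $L^1(\mathbb{R})$. A mixed discrete-continuous Young inequality — majorize $\kappa$ on each unit interval by its supremum there, bound $\int_n^{n+1}F$ by $\bigl(\int_n^{n+1}F^q\bigr)^{1/q}$ for $q\ge1$, and then apply discrete Young on $\mathbb{Z}$ — gives $\|(G(\ell))_{\ell\in\mathbb{Z}}\|_{\ell^q}\le C\|F\|_{{\rm L}^q(\mathbb{R})}$. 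Strictly, this produces the inequality with $D_a\psi^*$ in place of $D_a\psi$; the two coincide when $\psi^*\ast\psi=\psi\ast\psi^*$ (so that $\psi^*$, too, is admissible with the same normalization), in particular for the wavelets $\psi=\mathcal{L}\widehat h(\mathcal{L})\delta$ of the previous theorem and for real, heat-kernel type $\psi$, and in any event the form with $D_a\psi^*$, together with the reverse estimate below applied to $\psi^*$, already establishes $\|u\|_{\dot{B}_{p,q}^s}\asymp\bigl\|a\mapsto a^s\|u\ast D_a\psi^*\|_p\bigr\|_{{\rm L}^q(\mathbb{R}^+;\,da/a)}$.

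The reverse estimate $\bigl\|a\mapsto a^s\|u\ast D_a\psi\|_p\bigr\|_{{\rm L}^q(\mathbb{R}^+;\,da/a)}\le C\|u\|_{\dot{B}_{p,q}^{s,\phi}}$ is entirely symmetric. Assuming $u\in\dot{B}_{p,q}^{s,\phi}$, LP-admissibility of $\phi$ gives $u=\lim_N\sum_{|j|\le N}u\ast\phi_j^*\ast\tilde\phi_j$; the second kernel bound makes $\sum_j\|u\ast\phi_j^*\|_p\,\|\tilde\phi_j\ast D_a\psi\|_1<\infty$, so $u\ast D_a\psi=\sum_j(u\ast\phi_j^*)\ast(\tilde\phi_j\ast D_a\psi)$ with ${\rm L}^p$-convergence (the limits being identified as before, now using Remark \ref{rem:LP_norm_dist} to handle the polynomial ambiguity when $\psi\notin\mathcal{Z}(G)$), and in the variable $a=2^t$ one gets $a^s\|u\ast D_a\psi\|_p\le C_M\sum_j\lambda(t-j)G(j)$ with $\lambda(r)=2^{rs}\min(2^{rM},2^{-rk})$, again exponentially decaying; the mixed Young inequality in the other direction then gives $\|F\|_{{\rm L}^q(\mathbb{R})}\le C\|(G(j))_{j\in\mathbb{Z}}\|_{\ell^q}$. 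For $p=\infty$, with $\psi\in\mathcal{Z}(G)$, the same argument applies verbatim, all ${\rm L}^p$-convergence statements and the Hausdorff-Young estimate being read in the sense of Remark \ref{rem:LP_norm_dist}, and Theorem \ref{thm:main}(b) being used in the reduction. I expect the principal obstacle to lie in these convergence upgrades: turning the purely distributional reproducing identity, respectively the Littlewood-Paley decomposition of $u$, into ${\rm L}^p$-convergent superpositions that actually represent $u\ast\phi_\ell^*$, respectively $u\ast D_a\psi$ — which is exactly what the scalar $L^1$-kernel bounds from Lemma \ref{lem:decay_conv} deliver — after which only the routine mixed discrete-continuous Young inequality remains.
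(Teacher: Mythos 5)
Your proposal is correct and follows essentially the same route as the paper's proof: reduction to a fixed LP-admissible pair $(\varphi,\varphi)\in\mathcal{Z}(G)^2$ from Lemma \ref{lem:construct_admissible_pair}, the reproducing identity convolved against $\varphi_\ell^*$ (resp.\ the discrete Littlewood--Paley decomposition convolved against $D_a\psi$), the ${\rm L}^1$ kernel bounds supplied by Lemma \ref{lem:decay_conv}, identification of the ${\rm L}^p$-limit with the distributional limit, and a mixed discrete--continuous Young inequality obtained by splitting the scale axis into dyadic blocks (your substitution $a=2^t$ with an exponentially decaying kernel $\kappa$ is the same computation as the paper's $\int_1^2\sum_{\ell}$ decomposition followed by H\"older and discrete Young). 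Your explicit observation that the forward estimate naturally yields $\|u\ast D_a\psi^*\|_p$ rather than $\|u\ast D_a\psi\|_p$ flags a wrinkle that is equally present, but unacknowledged, in the paper's own argument, so it counts in your favour rather than as a gap.
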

\begin{proof}
The strategy consists in adapting the proof of Theorem \ref{thm:main} to the setting where one summation over scales is replaced by integration. This time however, we have to deal with both directions of the norm equivalence. 
In the following estimates, we use the symbol $\preceq$ to indicate an inequality that is true up to a fixed constant that may depend on $p$ and $q$, but is independent of $u$. 

Let us first assume that 
\[
 \int_{\mathbb{R}} a^{sq} \| f \ast D_a \psi \|_p^q \frac{da}{a} < \infty~,
\] for $u \in \mathcal{S}'(G)/\mathcal{P}$, $1 \le p,q \le \infty$, for an admissible function $\psi \in S(G)$ with $k_\psi>|s|$ vanishing moments ($\psi \in \mathcal{Z}(G)$, if $p=\infty$). Let $(\varphi,\varphi) \in \mathcal{Z}(G)$ denote an LP-admissible pair. Then, for all $j \in \mathbb{Z}$,  
\begin{equation} \label{eqn:decomp_u_varphi}
 u \ast \varphi_j ^*= \lim_{\epsilon \to 0, A \to \infty} \int_{\epsilon}^A u \ast D_a \psi^* \ast D_a \psi \ast \varphi_j^* \frac{da}{a}
\end{equation} holds in $\mathcal{S}'(G)$, by \ref{lem:conv_SmP}.

We next prove that the right-hand side of (\ref{eqn:decomp_u_varphi}) converges in ${\rm L}^p$. For this purpose, introduce
\[
 c_j = \int_{0}^\infty \| u \ast D_a \psi^* \ast D_a \psi \ast \varphi_j^* \|_p \frac{da}{a}~.
\]

 We estimate
\begin{eqnarray}
 \nonumber 
 c_j & \le & \int_{0}^\infty \| u \ast D_a \psi^* \|_p \| D_a \psi \ast \varphi_j^* \|_1 \frac{da}{a} \\
 & = & \label{eqn:anschluss_infty} \int_{1}^2 \sum_{\ell \in \mathbb{Z}} \| u \ast D_{a 2^\ell} \psi^* \|_p \| D_{a 2^\ell} \psi \ast \varphi_j^* \|_1 \frac{da}{a} \\
& \le & \label{eqn:anschluss_q} \left( \int_{1}^2 \left( \sum_{\ell \in \mathbb{Z}} \| u \ast D_{a 2^\ell} \psi^* \|_p \| D_{a 2^\ell} \psi \ast \varphi_j^* \|_1 \right)^q \frac{da}{a} \right)^{1/q} \log(2)^{1/q'}~, 
\end{eqnarray} where we used that $da/a$ is scaling invariant. Note that the last inequality is H\"older's inequality for $q< \infty$. In this case, taking $q$th powers and summing over $j$ yields
\begin{equation} \label{eqn:onedir_integrand}
 \sum_{j \in \mathbb{Z}} 2^{jsq} c_j^q \preceq
\int_1^2 \sum_{j \in \mathbb{Z}} 2^{jsq} \left(  \sum_{\ell \in \mathbb{Z}} \| u \ast D_{a 2^\ell} \psi^* \|_p \| D_{a 2^\ell} \psi \ast \varphi_j^* \|_1 \right)^q \frac{da}{a}~.
\end{equation}

 Using vanishing moments and Schwartz properties of $\psi$ and $\varphi$, we can now employ  (\ref{eqn:decay_conv1_norm}) and (\ref{eqn:decay_conv2_norm}) to obtain. 
\begin{equation} \label{eqn:l1_kernel}
 \| D_{a 2^\ell} \psi \ast \varphi_j^* \|_1 \preceq 2^{-|j-\ell|k}~,
\end{equation} with a constant independent of $a \in [1,2]$. But then, since $k>|s|$, we may proceed just as in the proof of \ref{thm:main} to estimate the integrand in (\ref{eqn:onedir_integrand}) via 
\begin{equation}
 \sum_{j \in \mathbb{Z}} 2^{jsq} \left(  \sum_{\ell \in \mathbb{Z}} \| u \ast D_{a 2^\ell} \psi^* \|_p \| D_{a 2^\ell} \psi \ast \varphi_j^* \|_1 \right)^q \preceq\sum_{\ell \in \mathbb{Z}} 2^{\ell s q} \| u \ast D_{a 2^\ell} \psi^* \|_p^q ~.
\end{equation}
Summarizing, we obtain 
\begin{eqnarray*}
 \sum_{j} 2^{jsq} c_j^q & \preceq &  \int_1^2 \sum_{\ell \in \mathbb{Z}} 2^{\ell s q} \| u \ast D_{a 2^\ell} \psi^* \|_p^q \frac{da}{a} \\
& \preceq & \int_{0}^\infty a^{sq}  \| u \ast D_{a 2^\ell} \psi^* \|_p^q \frac{da}{a} < \infty~.
\end{eqnarray*} In particular, $c_j < \infty$. But then the right-hand side of (\ref{eqn:decomp_u_varphi}) converges in ${\rm L}^p$, with limit $u \ast \varphi_j^*$. The Minkowski-inequality for integrals yields $\| u \ast \varphi_j^*\|_p \le c_j$, and thus 
\[
 \|u \|_{\dot{B}_{p,q}^s}^q \preceq  \int_{0}^\infty a^{sq}  \| u \ast D_{a 2^\ell} \psi^* \|_p^q \frac{da}{a}~,
\] as desired. In the case $q=\infty$,  (\ref{eqn:l1_kernel}) yields that 
\[
 \sup_{j}  2^{js} \left( \sum_{\ell \in \mathbb{Z}} \| u \ast D_{a 2^\ell} \psi^* \|_p \| D_{a 2^\ell} \psi \ast \varphi_j^* \|_1 \right) \preceq \sup_{\ell} 2^{\ell s} \| u \ast D_{a 2^\ell} \psi^* \|_p^q~.
\] Thus, by (\ref{eqn:anschluss_infty})
\begin{eqnarray*}
 \sup_{j} 2^{js} c_j & \preceq & \int_1^2 \sup_{\ell} 2^{\ell s} \| u \ast D_a 2^\ell \psi^* \|_p \frac{da}{a} \\
 & \preceq & {\rm ess ~sup}_{a} a^{s} \| u \ast D_a \psi^* \|_p~.
\end{eqnarray*} The remainder of the argument is the same as for the case $q < \infty$. 

Next assume $u \in \dot{B}_{p,q}^s$. Then, for all $a \in [1,2]$ and $\ell \in \mathbb{Z}$,
\[
 u \ast D_{a2^\ell} = \sum_{j \in \mathbb{Z}} u \ast \varphi_j^* \ast \varphi_j \ast D_{a2^\ell} \psi^*~, 
\]  with convergence in $\mathcal{S}'(G)/\mathcal{P}$; again, for $\psi \in \mathcal{Z}(G)$ convergence holds even in $\mathcal{S}'(G)$. As before,
\[
 \|\sum_{j \in \mathbb{Z}} u \ast \varphi_j^* \ast \varphi_j \ast D_{a2^\ell} \psi^* \|_p 
 \le \sum_{j \in \mathbb{Z}} \| u \ast \varphi_j^* \|_p \|  \varphi_j \ast D_{a2^\ell} \psi^* \|_1 ~.
\]
Again, we have $\| \varphi_j \ast D_{a2^\ell} \psi^* \|_1 \preceq 2^{-|j-\ell|k}$ with a constant independent of $a$. Hence one concludes in the same fashion as in the proof of Theorem \ref{thm:main} that, for all $a \in [1,2]$,  
\begin{equation}
 \left\| \left(  2^{\ell s} \| u \ast D_{a2^\ell} \psi^* \|_p \right)_{\ell \in \mathbb{Z}} \right\|_q \preceq \left\| \left( 2^{js} \| u \ast \varphi_j^* \|_p \right)_{j \in \mathbb{Z}} \right\|_q~,
\end{equation}
again with a constant independent of $a$. In the case $q=\infty$, this finishes the proof immediately, and for $q < \infty$, we 
integrate the $q$th power over $a \in [1,2]$ and sum over $\ell$ to obtain the desired inequality. 
\end{proof}

\begin{remark}
 Clearly, the proof of \ref{thm:main_2} can be accomodated to consider discrete Littlewood-Paley-decompositions based on integer powers of any $a >1$ instead of $a=2$. Thus consistently replacing powers of $2$ in Definitions \ref{defn:LP_S(G)} and \ref{defn:B_psi} by powers of $a > 1$ results in the same scale of Besov spaces. 
\end{remark}

\section*{Acknowledgements}
I thank Azita Mayeli for fruitful discussions.

\end{document}